\journal{arXiv preprint}
\newtheorem{theorem}{Theorem}[section]
\newtheorem{definition}[theorem]{Definition}
\newtheorem{remark}[theorem]{Remark}
\newcommand{\R}{\mathbb{R}}
\newcommand{\Fpw}{\mathcal{F}_{\phi,\omega}}       
\newcommand{\Jphi}{|J_\phi(x)|} 
\newcommand{\Lpw}{L^2_{\phi,\omega}}
\newcommand{\Qxi}{Q(\xi)}                            
\newcommand{\Pphi}{P(\phi(x))}                       
\def\ps@pprintTitle{%
 \let\@oddhead\@empty
 \let\@evenhead\@empty
 \def\@oddfoot{\centerline{\thepage}\hfill\textit{arXiv preprint}}%
 \def\@evenfoot{\centerline{\thepage}\hfill\textit{arXiv preprint}}%
}
\begin{document}

\begin{frontmatter}

\title{Weighted fractional ultrahyperbolic diffusion on geometrically deformed domains}

\author{Gustavo A. Dorrego}
\ead{gadorrego@exa.unne.edu.ar} 
\address{Department of Mathematics, Faculty of Exact and Natural Sciences and Surveying, Universidad Nacional del Nordeste, Corrientes, Argentina.} 

\begin{abstract}
Standard fractional models on manifolds often conflate geometric anisotropy with medium heterogeneity. In this Letter, we overcome this rigidity by deriving the fundamental solution for a weighted space-time fractional ultrahyperbolic operator, denoted by $(-\Box_{\phi,\omega})^{\beta}$. Using a novel spectral approach based on the Weighted Fourier Transform, we explicitly \textbf{decouple the medium density from the geometric deformation}. A crucial finding is the emergence of a \textbf{geometry-independent drift mechanism} driven purely by the inhomogeneity of the medium. The Green's function is obtained in closed form via the Fox H-function, providing a unified and computable framework for anomalous transport in complex, structurally deformed media.
\end{abstract}

\begin{keyword}
Fractional ultrahyperbolic operator \sep Weighted Fourier transform \sep Heterogeneous media \sep Fox H-function \sep Geometric drift
\MSC[2020] 26A33 \sep 35R11 \sep 42B10 \sep 33C60
\end{keyword}

\end{frontmatter}

\section{Introduction}

Fractional differential equations have proven to be indispensable for modelling non-local phenomena in fractal and viscoelastic media \cite{Kilbas2006, Mainardi2010}. However, standard fractional operators, including the ultrahyperbolic class studied in \cite{Dorrego2016, Dorrego2021}, are typically defined on Euclidean spaces with flat metrics. This restricts their applicability in systems exhibiting geometric deformations or variable densities, where the direction of anomalous transport is coupled to the inhomogeneity of the medium. To address this, the framework of fractional calculus with respect to a function \cite{Almeida2017, Fernandez2024} offers a pathway to incorporate geometric modulation directly into the operator structure.

In this letter, we construct the fundamental solution for the weighted space-time fractional ultrahyperbolic equation:
\begin{equation} \label{eq:intro_problem}
    {}^{H}\mathcal{D}_{t,\gamma,\rho}^{\mu,\nu} u(x,t) + c^2 (-\Box_{\phi,\omega})^{\beta} u(x,t) = 0, \quad x \in \mathbb{R}^n, t>0,
\end{equation}
where the spatial operator acts on a manifold deformed by a diffeomorphism $\phi$. Our approach relies on the \textit{weighted Fourier transform} $\mathcal{F}_{\phi,\omega}$, whose spectral properties have been rigorously established in \cite{DorregoFourier2025}. By leveraging this spectral framework, we overcome the limitations of standard Euclidean models and obtain the Green's function in a closed form involving the \textbf{Fox H-function}.  Crucially, we reveal that the decoupling of density and geometry induces a natural drift term,  and show that the system's evolution is governed by a \textbf{generalised ultrahyperbolic distance $\Pphi$}, providing a robust tool for modelling anomalous transport in anisotropic inhomogeneous media.

The remainder of this Letter is organised as follows: Section 2 outlines the theoretical background of weighted fractional operators; Section 3 presents the spectral definition of the operator and the emergence of the drift; and Section 4 derives the fundamental solution.

\section{Preliminaries}

We adopt the generalised framework established in \cite{FernandezFahad2022_Conj}. Let $\gamma \in C^1[0, \infty)$ be an increasing function with $\gamma'>0$ and $\rho(t)$ a positive weight. The \textit{weighted Hilfer fractional derivative} ${}^{H}\mathcal{D}_{t,\gamma,\rho}^{\alpha,\beta}$ (for $m-1 < \alpha \le m$) is defined via weighted fractional integrals as in \cite{FernandezFahad2022_Conj}. Its generalised Laplace transform is given by:
\begin{equation} \label{eq:Laplace_Hilfer}
    \mathcal{L}_{\gamma,\rho} \left[ {}^{H}_{0+}\mathcal{D}_{t,\gamma,\rho}^{\alpha,\beta} \psi \right](z) = z^\alpha \tilde{\psi}(z) - \sum_{k=0}^{m-1} c_k z^{m(1-\beta) + \alpha\beta - k - 1},
\end{equation}
where the constants $c_k$ depend on the initial values of the weighted integrals of $\psi$.

For the spatial spectral analysis, we consider a diffeomorphism $\phi: \mathbb{R}^n \to \mathbb{R}^n$ with Jacobian $\Jphi$ and a non-vanishing weight $\omega(x)$. We recall the core spectral definitions from \cite{DorregoFourier2025}:

\begin{definition}[Weighted Fourier Transform and Inversion \cite{DorregoFourier2025}]\label{def:fourier_grad}
    The Weighted Fourier Transform $\Fpw$ of a function $f \in \Lpw(\R^n)$ is defined as:
    \begin{equation} \label{eq:def_Rn}
        [\Fpw f](\xi) = \frac{1}{(2\pi)^{n/2}} \int_{\R^n} e^{-i \xi \cdot \phi(x)} \omega(x) f(x) \Jphi \, dx.
    \end{equation}
    The corresponding \textbf{Inverse Weighted Fourier Transform} is given by:
    \begin{equation} \label{eq:inv_def_Rn}
        [\mathcal{F}^{-1}_{\phi,\omega} g](x) = \frac{1}{\omega(x)(2\pi)^{n/2}} \int_{\R^n} e^{i \xi \cdot \phi(x)} g(\xi) \, d\xi.
    \end{equation}
    Associated with this structure is the \textbf{weighted deformed gradient} $\nabla_{\phi,\omega}$, defined by the correspondence $\Fpw[\nabla_{\phi,\omega} f](\xi) = i\xi \Fpw[f](\xi)$, or explicitly in the spatial domain as:
    \begin{equation}
        \nabla_{\phi,\omega} f(x) := \frac{1}{\omega(x)} \left( [D\phi(x)]^{-T} \nabla \right) \big( \omega(x) f(x) \big).
    \end{equation} 
\end{definition}

\section{The Weighted Ultrahyperbolic Operator}

To establish a general framework for anomalous diffusion in heterogeneous media, we proceed to define the spatial operator directly via the spectral theory developed in \cite{DorregoFourier2025}. Unlike classical Riemannian approaches that rigidly couple the measure to the metric, we allow the weight function $\omega(x)$ (representing medium density) to be independent of the geometric deformation $\phi(x)$.

\subsection{Induced Geometry and Causal Cones}

The algebraic structure of the operator is intrinsically linked to the geometry induced by the diffeomorphism $\phi$.

\begin{definition}[Deformed Ultrahyperbolic Distance]
We define the \textbf{generalized ultrahyperbolic distance} $P_{\phi}: \R^n \to \R$ associated with the diffeomorphism $\phi$ as:
\begin{equation} \label{eq:deformed_distance}
    P(\phi(x)) := \sum_{j=1}^{p} [\phi_j(x)]^2 - \sum_{k=p+1}^{n} [\phi_k(x)]^2.
\end{equation}
This function generalises the classical Lorentzian/ultrahyperbolic distance to the space endowed with the geometry induced by $\phi$.
\end{definition}

\begin{remark}[The Generalised Characteristic Cone]
The level set equation $P(\phi(x)) = 0$ defines the \textit{``Deformed Light Cone''} or characteristic manifold. Unlike the classical case, where the cone is a straight quadric hypersurface, here the surface curves following the flow lines of $\phi$. The singularities of the fundamental solution will propagate along this manifold.
\end{remark}

\begin{theorem}[Spectral Identity for the Density-Compensated Distance] \label{thm:spectral_identity}
Given the definition of the Weighted Fourier Transform $\mathcal{F}_{\phi,\omega}$ which incorporates the Jacobian determinant in its measure, the transform of the generalized distance scaled by the inverse density is given by:
\begin{equation}
\begin{split}
    \mathcal{F}_{\phi,\omega} &\left[ \frac{1}{\omega(x)} (\Pphi \pm i0)^\lambda \right] (\xi) \\
    &= \frac{e^{\mp i \frac{q \pi}{2}} 2^{2\lambda+n/2} \Gamma(\lambda + n/2)}{\Gamma(-\lambda)} (\Qxi \mp i0)^{-\lambda - n/2}.
\end{split}
\end{equation}
\end{theorem}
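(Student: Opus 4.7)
The strategy is to reduce the weighted spectral identity to the classical Gelfand--Shilov Fourier transform of the ultrahyperbolic distribution $(P\pm i0)^\lambda$ on $\R^n$, exploiting the fact that the diffeomorphism and density weight are absorbed precisely into the measure built into $\Fpw$.

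First, I would substitute $f(x) = \omega(x)^{-1}(\Pphi\pm i0)^\lambda$ into the definition \eqref{eq:def_Rn}. The density $\omega(x)$ cancels exactly against its reciprocal in $f$, leaving
\[
\Fpw\!\left[\tfrac{1}{\omega}(\Pphi\pm i0)^\lambda\right]\!(\xi) \;=\; \frac{1}{(2\pi)^{n/2}}\int_{\R^n} e^{-i\xi\cdot\phi(x)}\,(\Pphi \pm i0)^\lambda\, \Jphi \, dx.
\]
This cancellation is the spectral manifestation of the geometry/medium decoupling emphasised in the introduction: $\omega$ drops out entirely, and only $\phi$ and $J_\phi$ remain.

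Next I would perform the change of variables $y = \phi(x)$. Since $\phi$ is a diffeomorphism, $dy = \Jphi\, dx$ and $\Pphi$ collapses to the flat ultrahyperbolic form $P(y)=\sum_{j=1}^{p} y_j^2-\sum_{k=p+1}^{n} y_k^2$, so the right-hand side reduces to the standard Fourier transform $(2\pi)^{-n/2}\,\mathcal{F}\bigl[(P\pm i0)^\lambda\bigr](\xi)$. I would then invoke the classical Gelfand--Shilov identity (\emph{Generalized Functions}, vol.~I, \S III), which supplies the factor $e^{\mp iq\pi/2}\,\pi^{n/2}\, 2^{2\lambda+n}\,\Gamma(\lambda+n/2)/\Gamma(-\lambda)$ together with the dual distribution $(\Qxi\mp i0)^{-\lambda-n/2}$. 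Combining $\pi^{n/2}$ with $(2\pi)^{-n/2}$ yields $2^{-n/2}\cdot 2^{2\lambda+n} = 2^{2\lambda+n/2}$, which is exactly the prefactor stated in the theorem, and the sign flip $(\pm i0)\mapsto(\mp i0)$ is inherited from the duality of the two ultrahyperbolic forms.

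The main substantive difficulty is purely distributional. The object $(\Pphi\pm i0)^\lambda$ is defined as the boundary value $\lim_{\epsilon\to 0^+}(\Pphi\pm i\epsilon)^\lambda$ of a holomorphic family in $\lambda$, and the pointwise manipulations above are literally valid only in the strip $-n/2<\mathrm{Re}\,\lambda<0$, where the integrand is locally integrable. I would therefore establish the identity first in this strip and then extend it to all admissible $\lambda\in\mathbb{C}$ (away from poles of $\Gamma(-\lambda)$ and $\Gamma(\lambda+n/2)$) by analytic continuation in $\lambda$, both sides being meromorphic. Commutation of the $\epsilon\to 0^+$ limit with the change of variables is harmless because $\phi$ is a smooth diffeomorphism, so $(P\pm i\epsilon)\circ\phi = \Pphi \pm i\epsilon$ and the $i0$-regularisation pulls back unambiguously.
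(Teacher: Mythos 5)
Your proposal follows exactly the same route as the paper's own proof: cancel $\omega$ against $1/\omega$ in the definition of $\Fpw$, change variables $y=\phi(x)$ so that $\Jphi\,dx = dy$, and invoke the Gelfand--Shilov transform of $(P\pm i0)^\lambda$ to produce $(\Qxi\mp i0)^{-\lambda-n/2}$ with the stated constant. In fact your write-up is slightly more complete than the paper's, since you explicitly reconcile the $(2\pi)^{-n/2}$ normalisation with the Gelfand--Shilov constant (obtaining $2^{2\lambda+n/2}$) and justify the distributional manipulations by analytic continuation in $\lambda$, whereas the paper leaves the constant as ``the corresponding constant.''
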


\begin{proof}
Let $f(x) = \frac{1}{\omega(x)} (\Pphi \pm i0)^\lambda$. Applying the definition \eqref{eq:def_Rn}:
\begin{equation}
\begin{split}
    [\mathcal{F}_{\phi,\omega} f](\xi) &= \frac{1}{(2\pi)^{n/2}} \int_{\R^n} e^{-i \xi \cdot \phi(x)} \omega(x) \\
    &\quad \times \left[ \frac{1}{\omega(x)} (\Pphi \pm i0)^\lambda \right] \Jphi \, dx.
\end{split}
\end{equation}
The weight $\omega(x)$ cancels explicitly:
\begin{equation}
    [\mathcal{F}_{\phi,\omega} f](\xi) = \frac{1}{(2\pi)^{n/2}} \int_{\R^n} e^{-i \xi \cdot \phi(x)} (\Pphi \pm i0)^\lambda \Jphi \, dx.
\end{equation}
We apply the global change of variables $y = \phi(x)$. Consequently, the volume element transforms as $dy = \Jphi dx$. The integral becomes:
\begin{equation}
    \mathcal{I}(\xi) = \frac{1}{(2\pi)^{n/2}} \int_{\R^n} (P(y) \pm i0)^\lambda e^{-i \xi \cdot y} \, dy.
\end{equation}
This is exactly the standard Fourier transform of the generalized quadratic form. Invoking the result by Gelfand and Shilov \cite{Gelfand1964}, we obtain the spectral symbol $(\Qxi \mp i0)^{-\lambda - n/2}$ multiplied by the corresponding constant.
\end{proof}
\subsection{Spectral Definition of the Operator}

We now introduce the fractional operator as a pseudo-differential operator characterized by its symbol in the weighted frequency domain.

\begin{definition}[Weighted Fractional Ultrahyperbolic Operator] \label{def:weighted_operator}
Let $f \in \mathcal{S}_{\phi,\omega}(\mathbb{R}^{n})$ be a function in the generalized Schwartz space defined in \cite{DorregoFourier2025}. We define the fractional ultrahyperbolic operator $(-\Box_{\phi,\omega})^{\beta}$ of order $\beta > 0$ by:
\begin{equation} \label{eq:operator_def}
    (-\Box_{\phi,\omega})^{\beta}f(x) := \mathcal{F}_{\phi,\omega}^{-1} \left[ (Q(\xi) \mp i0)^{\beta} \mathcal{F}_{\phi,\omega}[f](\xi) \right](x),
\end{equation}
where $\mathcal{F}_{\phi,\omega}$ is the weighted Fourier transform \eqref{eq:def_Rn} and its inverse is given by Theorem 3.3 in \cite{DorregoFourier2025}. The symbol $(Q(\xi) \mp i0)^{\beta}$ corresponds to the causal/anti-causal regularisation of the quadratic form associated with the ultrahyperbolic metric \cite{Samko1993}.
\end{definition}

\begin{remark}[Independence of Density and Geometry]
\label{rem:independence}
A key feature of Definition \ref{def:weighted_operator} is its flexibility. Unlike the Laplace-Beltrami operator on Riemannian manifolds, where the measure is strictly tied to the metric via $\omega(x) = |J_\phi(x)|$, our spectral definition allows the weight function $\omega(x)$ to be chosen freely. This is crucial for modeling complex materials where the particle distribution ($\omega$) does not necessarily follow the structural anisotropy ($\phi$).
\end{remark}

\subsection{Operational Decomposition and the Induced Drift}
\label{subsec:drift}

To understand the interaction between the geometric deformation and the medium density, it is essential to examine the structure of $\Box_{\phi,\omega}$ in the local limit $\beta=1$. In this regime, the operator is defined through a weighted divergence, and its action can be decomposed to reveal its underlying advective nature.

Let $g^{ij}(x)$ denote the coefficients of the inverse metric tensor induced by the diffeomorphism $\phi$, given by $g^{ij} = ([D\phi]^{-1} [D\phi]^{-T})_{ij}$. The operator acts on a scalar field $u$ as:
\begin{equation} \label{eq:operator_analysis}
    \Box_{\phi,\omega} u = \frac{1}{\omega(x)} \sum_{i,j=1}^{n} \frac{\partial}{\partial x_i} \left( \omega(x) g^{ij}(x) \frac{\partial u}{\partial x_j} \right).
\end{equation}

To decouple the effects of the mapping and the weight, we introduce the \textit{relative density ratio}:
\begin{equation}
    \sigma(x) = \frac{\omega(x)}{|J_\phi(x)|},
\end{equation}
where $|J_\phi(x)|$ is the Jacobian determinant of $\phi$. It is important to emphasize that $\sigma(x)$ is an analytical tool rather than a functional restriction; $\omega$ and $\phi$ remain independent degrees of freedom. By applying the product rule to \eqref{eq:operator_analysis}, we obtain the following operational decomposition:
\begin{equation}
    \Box_{\phi,\omega} u = \Delta_{\phi} u + \sum_{j=1}^n \mathcal{V}_j(x) \frac{\partial u}{\partial x_j}.
\end{equation}

Here, $\Delta_{\phi}$ is the \textbf{purely geometric operator} associated with the Riemannian volume measure $|J_\phi|dx$:
\begin{equation}
    \Delta_{\phi} u := \frac{1}{|J_\phi(x)|} \sum_{i,j=1}^{n} \frac{\partial}{\partial x_i} \left( |J_\phi(x)| g^{ij}(x) \frac{\partial u}{\partial x_j} \right).
\end{equation}

This term accounts for the diffusion dictated by the deformation $\phi$, while the second term—a \textbf{first-order differential operator}—characterizes a transport process induced by the non-equilibrium weight. The coefficients of this drift field $\mathcal{V}_j(x)$ are explicitly given by:
\begin{equation}
    \mathcal{V}_j(x) = \sum_{i=1}^n g^{ij}(x) \frac{\partial}{\partial x_i} \ln \sigma(x).
\end{equation}

In the context of anomalous transport, this decomposition signifies a symmetry breaking. While $\Delta_\phi$ models locally anisotropic diffusion, the drift term $\mathcal{V}_j$ induces a net translation towards regions where the density $\omega(x)$ dominates the geometric volume element. This structure is formally analogous to Fokker-Planck dynamics, confirming that the decoupling of density and geometry naturally incorporates an effective bias into the model.

\section{Fundamental Solution of the Cauchy Problem}

Before stating the main result, we emphasize that the initial conditions are formulated in terms of the weighted fractional integral $I_{0+,\gamma,\rho}^{(1-\nu)(1-\mu)}$. This specific order is intrinsic to the definition of the weighted Hilfer derivative ${}^{H}\mathcal{D}_{t,\gamma,\rho}^{\mu,\nu}$. Furthermore, to ensure consistency with the spectral geometry, the spatial excitation is given by the weighted Dirac delta $\delta_{\phi,\omega}(x)$. This distribution is defined by the sifting property $\int_{\mathbb{R}^n} f(x) \delta_{\phi,\omega}(x) \omega(x) dx = f(0)$, acting as the identity element for the weighted convolution.

The operator decomposition established in Subsection \ref{subsec:drift} implies that the propagator of the system must simultaneously resolve the anisotropic diffusion and the advective bias. Analytically, this means the fundamental solution will not exhibit the classical radial symmetry of the ultrahyperbolic case. Instead, the interplay between $\Delta_\phi$ and the drift field $\mathcal{V}_j(x)$ dictates a deformation of the characteristic manifolds, which is captured by the integral representation in terms of Fox $H$-functions.

The main result regarding the solution of the Cauchy problem is presented in the following theorem:

\begin{theorem}[Fundamental Solution] \label{thm:main_sol}
    Consider the weighted space-time fractional ultra-hyperbolic problem:
    \begin{equation} \label{eq:problem_statement}
       \left\{
       \begin{array}{l}
          {}^{H}\mathcal{D}_{t,\gamma,\rho}^{\mu,\nu} u(x,t) + c^2 (-\Box_{\phi,\omega})^{\beta} u(x,t) = 0, \quad x \in \mathbb{R}^n, t>0, \\
          \\
          I_{0+,\gamma,\rho}^{(1-\nu)(1-\mu)} u(x,0^+) = \delta_{\phi,\omega}(x), \quad \lim_{|x|\to\infty} u(x,t) = 0.
       \end{array}
       \right.
    \end{equation}
    where $\mu \in (0,1]$, $\nu \in [0,1]$, $\beta > 0$, and $\delta_{\phi,\omega}(x)$ denotes the weighted Dirac delta distribution.
    
    The fundamental solution $\mathcal{G}(x,t)$ is given by the following Mellin-Barnes integral representation:
    \begin{equation} \label{eq:fund_sol_mellin}
        \mathcal{G}(x,t) =\Lambda(x,t)\frac{1}{2\pi i}\int_{L}{\frac{\Gamma(n/2-\beta s)\Gamma(s)\Gamma(1-s)}{\Gamma(\beta s)\Gamma(\nu-\mu s)}\mathcal{Z}(x,t)^{-s}}ds.
    \end{equation}
    where the pre-factor $\Lambda(x,t)$ is defined as:
    \begin{equation}
        \Lambda(x,t) = \frac{e^{\pm i\pi q/2}\rho(0^+)(\gamma(t))^{\nu-1}}{\rho(t)\pi^{n/2}} \frac{1}{\omega(x)} \frac{1}{\left(P(\phi(x))\pm i0\right)^{n/2}}, 
    \end{equation}
    and the scaling argument is $\mathcal{Z}(x,t) = 4^{\beta}c^2 (\gamma(t))^{\mu}(P(\phi(x))\pm i0)^{-\beta}$. Equivalently, in terms of the Fox H-function:
    \begin{equation} \label{eq:fox_solution_compact}
        \mathcal{G}(x,t) = \Lambda(x,t) H_{3,2}^{1,2} \left[ \mathcal{Z} \left| \begin{matrix} (1 - \frac{n}{2}, \beta), (0, 1), (0, \beta) \\ (0, 1), (1-\nu, \mu) \end{matrix} \right. \right].
    \end{equation}
\end{theorem}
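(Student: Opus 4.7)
The plan is to reduce the Cauchy problem to an algebraic equation by successive spectral transforms, then invert via a Mellin-Barnes representation that manifests naturally as a Fox $H$-function.

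First I would apply $\Fpw$ in the spatial variable. By Definition \ref{def:weighted_operator} the operator $(-\Boxpw)^{\beta}$ becomes multiplication by the symbol $(\Qxi\mp i0)^{\beta}$, while the sifting property of $\delta_{\phi,\omega}$ evaluates $\Fpw[\delta_{\phi,\omega}](\xi)$ to a constant. The problem reduces to a $t$-only fractional ODE with $\xi$ as a spectral parameter,
\begin{equation*}
 {}^{H}\mathcal{D}^{\mu,\nu}_{t,\gamma,\rho}\hat{u}(\xi,t)+c^{2}(\Qxi\mp i0)^{\beta}\hat{u}(\xi,t)=0,
\end{equation*}
and applying the generalised Laplace transform \eqref{eq:Laplace_Hilfer} with $m=1$ yields
\begin{equation*}
 \tilde{\hat{u}}(\xi,z)=\frac{\Fpw[\delta_{\phi,\omega}](\xi)\,z^{\nu(\mu-1)}}{z^{\mu}+c^{2}(\Qxi\mp i0)^{\beta}}.
\end{equation*}
Inversion with the Mittag-Leffler identity $\mathcal{L}^{-1}_{\gamma,\rho}[z^{\mu-\nu'}/(z^{\mu}+a)](t)=(\rho(0^{+})/\rho(t))\,\gamma(t)^{\nu'-1}E_{\mu,\nu'}(-a\gamma(t)^{\mu})$, with the index $\nu'$ pinned down by matching exponents, then provides $\hat{u}(\xi,t)$ in closed form.

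Next I would substitute the Mellin-Barnes representation
\begin{equation*}
 E_{\mu,\nu'}(-w)=\frac{1}{2\pi i}\int_{L}\frac{\Gamma(s)\Gamma(1-s)}{\Gamma(\nu'-\mu s)}w^{-s}\,ds
\end{equation*}
for $w=c^{2}\gamma(t)^{\mu}(\Qxi\mp i0)^{\beta}$, and apply $\Fouinv$ termwise inside the contour. The crucial input is Theorem \ref{thm:spectral_identity} read in reverse: setting $\lambda=\beta s-n/2$ gives
\begin{equation*}
 \Fouinv\bigl[(\Qxi\mp i0)^{-\beta s}\bigr](x)=\frac{e^{\pm i q\pi/2}\,2^{n/2-2\beta s}\,\Gamma(n/2-\beta s)}{\Gamma(\beta s)\,\omega(x)}\,(\Pphi\pm i0)^{\beta s-n/2}.
\end{equation*}
Collecting the powers of $4^{\beta}c^{2}\gamma(t)^{\mu}(\Pphi\pm i0)^{-\beta}=\mathcal{Z}(x,t)$ and pulling the $s$-independent factors out of the integral reproduces the prefactor $\Lambda(x,t)$ and the kernel in \eqref{eq:fund_sol_mellin}. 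The compact form \eqref{eq:fox_solution_compact} then follows by reading the exponent tuples $(1-n/2,\beta),(0,1),(0,\beta)$ and $(0,1),(1-\nu,\mu)$ off the integrand against the standard definition of $H^{1,2}_{3,2}$.

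The principal obstacle is the rigorous justification of exchanging the Mellin-Barnes contour with the inverse weighted Fourier transform. The contour $L$ has to be placed strictly between the poles of $\Gamma(s)$, $\Gamma(n/2-\beta s)$ and $\Gamma(\nu'-\mu s)$, and the integrand must decay along $|\operatorname{Im} s|\to\infty$ uniformly enough to legitimise Fubini on the product $L\times\mathbb{R}^{n}_{\xi}$. The $\pm i0$ prescription also has to be tracked consistently through every substitution, since it fixes the branch of $(\Pphi\pm i0)^{\lambda}$ via Gelfand-Shilov and any mismatch would propagate into the wrong sign of $q$ in the phase $e^{\pm i\pi q/2}$. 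A final bookkeeping issue is matching the $(2\pi)^{-n/2}$ factors produced by the two Fourier operations with the $\pi^{-n/2}$ appearing in $\Lambda$; this amounts to pinning down the precise value of $\Fpw[\delta_{\phi,\omega}]$ together with the $2^{n/2}$ coming from the Gelfand-Shilov identity in Theorem \ref{thm:spectral_identity}.
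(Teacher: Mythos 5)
Your proposal is correct and follows essentially the same route as the paper's own proof: weighted Fourier transform in space, reduction to a Hilfer-type fractional ODE solved via the generalised Laplace transform and a Mittag-Leffler function, substitution of its Mellin--Barnes representation, and term-wise inversion of $(Q(\xi)\mp i0)^{-\beta s}$ to produce the Fox $H$-function. The only cosmetic difference is that you obtain the inversion kernel by reading Theorem~\ref{thm:spectral_identity} in reverse (with $\lambda=\beta s-n/2$), whereas the paper re-derives it directly through the change of variables $y=\phi(x)$ and the Gel'fand--Shilov formula; these are the same computation, and your version even keeps the phase factor $e^{\pm i\pi q/2}$ constant in $s$, consistent with the final statement.
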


\begin{proof}
To find the fundamental solution, we apply the Weighted Fourier Transform defined in \eqref{eq:def_Rn} with respect to the spatial variable $x$. Let $\widehat{u}(\xi, t) = \mathcal{F}_{\phi,\omega}[u(x,t)](\xi)$.
Applying the transform to the problem \eqref{eq:problem_statement} and utilizing the spectral property that the transform of $(-\Box_{\phi,\omega})^\beta$ corresponds to multiplication by the symbol $(Q(\xi) \mp i0)^\beta$, we obtain the following fractional ordinary differential equation in time:
\begin{equation} \label{eq:ode_time}
    {}^{H}\mathcal{D}_{t,\gamma,\rho}^{\mu,\nu} \widehat{u}(\xi, t) + c^2 (Q(\xi) \mp i0)^\beta \widehat{u}(\xi, t) = 0.
\end{equation}
Applying the generalized Laplace transform $\mathcal{L}_{\gamma,\rho}$ and solving the algebraic equation leads to the solution in the frequency domain:
\begin{equation} \label{eq:sol_freq_ML}
    \widehat{u}(\xi, t) = \frac{\rho(0^+) (\gamma(t))^{\nu-1}}{\rho(t)} E_{\mu, \nu} \left( -c^2 (\gamma(t))^\mu (Q(\xi) \mp i0)^{\beta} \right).
\end{equation}
Substituting the Mittag-Leffler function by its Mellin-Barnes integral representation and applying the Inverse Weighted Fourier Transform $\mathcal{F}^{-1}_{\phi,\omega}$:
\begin{equation}
\begin{split}
    u(x,t) &= \frac{\rho(0^+) (\gamma(t))^{\nu-1}}{\rho(t)} \frac{1}{2\pi i} \\
    &\quad \times \int_{\mathcal{L}} \frac{\Gamma(s)\Gamma(1-s)}{\Gamma(\nu - \mu s)} (c^2 (\gamma(t))^\mu)^{-s} \underbrace{\mathcal{F}^{-1}_{\phi,\omega} \left[ (Q(\xi) \mp i0)^{-\beta s} \right]}_{\mathcal{I}(x, s)} \, ds.
\end{split}
\end{equation}
To evaluate the inner term $\mathcal{I}(x, s)$, we use the definition of the inverse transform given in Eq. \eqref{eq:inv_def_Rn}. The factor $1/\omega(x)$ appears explicitly from the definition. The integral component is:
\begin{equation}
    \frac{1}{(2\pi)^{n/2}} \int_{\mathbb{R}^n} e^{i \xi \cdot \phi(x)} (Q(\xi) \mp i0)^{-\beta s} \, d\xi.
\end{equation}
This integral corresponds exactly to the standard inverse Fourier transform of the symbol $(Q(\xi) \mp i0)^{-\beta s}$ evaluated at the spatial coordinate $y = \phi(x)$. Invoking the classical result for the Riesz kernel \cite{Gelfand1964}, we obtain:
\begin{equation}\label{eq:Riesz_Inverse}
\begin{split}
    \mathcal{F}^{-1}_{\phi,\omega} \left[ (Q(\xi) \mp i0)^{-\beta s} \right] &= \frac{1}{\omega(x)} \frac{e^{\mp i \frac{\pi}{2}(2\beta s - n)} \Gamma(n/2 - \beta s)}{\pi^{n/2} 2^{2\beta s} \Gamma(\beta s)} \\
    &\quad \times (P(\phi(x)) \pm i0)^{\beta s - n/2}.
\end{split}
\end{equation}
Substituting \eqref{eq:Riesz_Inverse} back into the integral representation and rearranging the Gamma terms, the expression matches the definition of the Fox H-function, yielding the explicit representation \eqref{eq:fox_solution_compact}.
\end{proof}

\begin{remark}[Convergence and Existence Conditions]\label{rem:convergence}
    Following the standard theory of the H-function (see \cite{Mathai2010}), we analyze the conditions for the existence and convergence of the fundamental solution obtained in Theorem \ref{thm:main_sol}.
    
    First, for the Mellin-Barnes contour $L$ to exist, the poles of the gamma functions in the numerator of the integrand must be separated. In our case, the poles of $\Gamma(s)$ are located at $s = -k$ ($k \in \mathbb{N}_0$), whereas the poles of $\Gamma(n/2 - \beta s)$ and $\Gamma(1-s)$ lie on the positive real axis (given that $\beta > 0$ and $n \ge 1$). Therefore, the poles are strictly separated, ensuring the contour $L$ is well-defined.

    Second, the absolute convergence of the integral is governed by the parameter $a^*$, defined as:
    \begin{equation}
        a^* = \sum_{j=1}^n \alpha_j - \sum_{j=n+1}^p \alpha_j + \sum_{j=1}^m \beta_j - \sum_{j=m+1}^q \beta_j.
    \end{equation}
    Substituting the parameters from our solution $H_{3,2}^{1,2}[\mathcal{Z}]$:
    \begin{equation}
        a^* = (\beta + 1) - \beta + 1 - \mu = 2 - \mu.
    \end{equation}
    The condition for absolute convergence, $a^* > 0$, implies $\mu < 2$. This constraint is physically consistent, as it covers the regimes of fractional diffusion ($0 < \mu \le 1$) and fractional diffusion-wave ($1 < \mu < 2$). The limiting case $\mu \to 2$ corresponds to the classical ultrahyperbolic wave equation, where the sector of analyticity vanishes ($a^* = 0$), consistent with the emergence of singular wavefronts on the characteristic cone.
    
    Additionally, the asymptotic behavior for large arguments is determined by $\mu^* = \sum_{j=1}^q \beta_j - \sum_{j=1}^p \alpha_j = \mu - 2\beta$. Even in cases where $\mu^* < 0$ (common in sub-diffusion), the solution remains convergent for all $\mathcal{Z} \neq 0$ as long as the condition $a^* > 0$ holds.
\end{remark}

\begin{remark}[Asymptotic Behavior and Heavy Tails]
\label{rem:asymptotic}
To elucidate the nature of the transport, we analyze the asymptotic behavior of the fundamental solution $\mathcal{G}(x,t)$ for large spatial distances (far-field limit). We consider the behavior of the Fox $H$-function for small arguments of $|\mathcal{Z}(x,t)|$, which corresponds to the limit $|\Pphi| \to \infty$.

Using the power series expansion of the $H$-function near zero, we note that the term corresponding to $k=0$ vanishes due to the presence of the factor $1/\Gamma(\beta s)$ in the Mellin-Barnes integrand (since $\Gamma(0) \to \infty$ in the denominator). Consequently, the series starts from $k=1$:
\begin{equation}
\begin{split}
    \mathcal{G}(x,t) &\sim \frac{C_{\text{pre}}}{\omega(x)(\Pphi \pm i0)^{n/2}} \\
    &\quad \times \sum_{k=1}^{\infty} \frac{(-1)^{k}}{k!} \mathcal{A}_k \left( \frac{4^{\beta}c^{2}(\gamma(t))^{\mu}}{(\Pphi \pm i0)^{\beta}} \right)^{k},
\end{split}
\label{eq:asymp_expansion}
\end{equation}
where $\mathcal{A}_k$ represents the quotient of Gamma functions associated with the expansion. Considering the leading term ($k=1$), the solution exhibits an algebraic decay of the form:
\begin{equation}
    \mathcal{G}(x,t) \propto \frac{1}{\omega(x)} \frac{1}{[\Pphi \pm i0]^{n/2+\beta}}.
    \label{eq:power_law_decay}
\end{equation}

This result offers a significant analytical interpretation:
\begin{enumerate}
    \item \textbf{Anomalous Geometry and Drift Manifestation:} The decay is not governed by the Euclidean distance $|x|$, but by the generalized ultrahyperbolic distance $\Pphi$ modulated by the local density $\omega(x)$. Crucially, the factor $1/\omega(x)$ acts as a spatial renormalization: in high-density regions, the propagator's amplitude is suppressed, effectively pushing the probability mass toward lower-density zones. This is the macroscopic manifestation of the drift field $\mathcal{V}_j(x)$ derived in Subsection 4.1.
    
    \item \textbf{Heavy Tails (Super-diffusion):} The power-law decay $|x|^{-(n+2\beta)}$ (assuming $\phi(x) \sim x$ asymptotically) confirms that the process exhibits non-local interactions. This contrasts with the exponential decay of Gaussian diffusion. The algebraic tail indicates a higher probability of large jumps, modeling super-diffusive transport along the geodesics induced by the deformation $\phi$.
\end{enumerate}
\end{remark}
\section{Relevant Particular Cases}

The analytical strength of the general solution obtained in Theorem \ref{thm:main_sol} lies in its capacity to recover classical results and to model anomalous dynamics in complex media through the judicious choice of the diffeomorphism $\phi$ and the weight $\omega$.

\subsection{Recovery of the Classical Case}
Consider the identity transformation $\phi(x) = x$ and the unit weight $\omega(x) = 1$. In this configuration, the induced metric is the standard Minkowski/Euclidean metric $g^{ij} = \delta_{ij}\eta_{ii}$ and the relative density ratio is $\sigma(x) = 1$. Consequently, the drift coefficients $\mathcal{V}_j$ vanish identically, and the fundamental solution $\mathcal{G}(x,t)$ reduces to the standard fractional ultrahyperbolic propagator (cf. \cite{Samko1993}). This confirms the consistency of our framework as a proper generalization of the homogeneous theory.

\subsection{Case Study: Exponentially Graded Media}
\label{subsec:exponential}

To illustrate the interplay between geometry and density, we analyze a domain characterized by strong directional anisotropy. We introduce the exponential deformation map\\ $\phi:\mathbb{R}^{n}\rightarrow\mathbb{R}^{n}$ defined component-wise by: 
\begin{equation}
\label{eq:exp_map}
\phi_{k}(x_{k})=\frac{e^{\lambda_{k}x_{k}}-1}{\lambda_{k}}, \quad k=1,\dots,n,
\end{equation} where $\lambda_{k}$ represents the degree of spatial stretching. 

In this scenario, we \textit{specifically choose} to prescribe the medium density in geometric equilibrium with the deformation by setting:
\begin{equation}
\label{eq:weight_exp}
\omega(x) = |\det J_\phi(x)| = \exp\left(\sum_{k=1}^{n}\lambda_{k}x_{k}\right).
\end{equation}
By setting $\sigma(x) = 1$, we intentionally nullify the advective drift field $\mathcal{V}_j$ derived in Subsection \ref{subsec:drift}. This choice is motivated by the need to \textbf{isolate and visualize the purely metric effects} of the transformation, providing a clear benchmark for the theory. This demonstrates that even in the absence of an explicit drift, the stretching of the underlying space is sufficient to break the radial symmetry of the transport process.

\begin{figure}[h!]
    \centering
    \includegraphics[width=\linewidth]{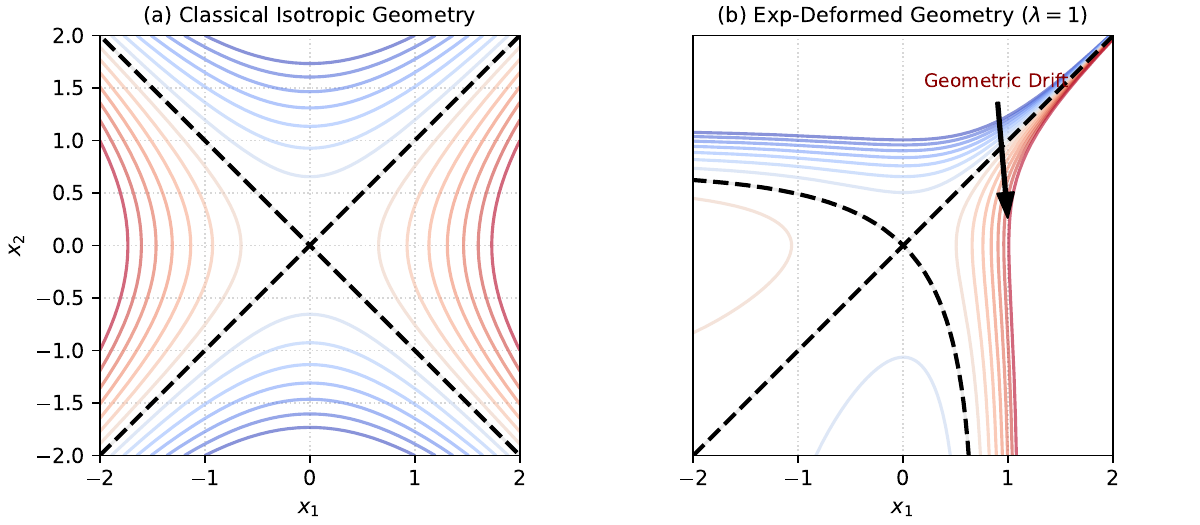}
    \caption{Impact of geometric deformation on the characteristic manifolds. 
    \textbf{(a)} Classical ultrahyperbolic geometry ($\lambda \to 0$), showing symmetric hyperbolic characteristics and linear light cones. 
    \textbf{(b)} Deformed geometry under the exponential map with $\lambda=1$. The inhomogeneity induces a curvature in the characteristic curves, breaking the spatial symmetry. This visualizes the structural origin of the bias: although the drift field $\mathcal{V}_j$ is nullified by the choice of $\omega$, the diffusion is 'guided' by the exponential stretching of the underlying space.}
    \label{fig:deformation}
\end{figure}

Analytically, this represents a material where the effective metric becomes exponentially singular. Under this configuration, the fundamental solution describes a fractional diffusion process where the wavefront is exponentially suppressed in the direction of increasing density, despite the absence of an explicit advective field. Substituting \eqref{eq:exp_map} and \eqref{eq:weight_exp} into Theorem \ref{thm:main_sol}, the propagator reveals how the non-local tails adapt to the graded profile of the medium.

\section{Discussion and Conclusions}

In this work, we have derived the fundamental solution for the space-time fractional ultrahyperbolic equation on geometrically deformed domains with non-uniform weights. By leveraging the spectral theory of the weighted Fourier transform \cite{DorregoFourier2025}, we constructed the operator $(-\Box_{\phi,\omega})^{\beta}$ as a pseudo-differential operator in the sense of tempered distributions. This approach successfully overcomes the geometric density constraint inherent in classical Riemannian models, allowing for independent control over the medium's anisotropy (via $\phi$) and its mass density (via $\omega$).

The analytical and physical implications of this framework are three-fold:
\begin{enumerate}
    \item \textbf{Deformed Causality:} The propagation of the fractional wavefront is no longer constrained by Euclidean distances but evolves along the characteristic manifolds defined by the generalized metric $P(\phi(x))=0$.
    \item \textbf{Emergent Drift Mechanism:} We have formally demonstrated that any mismatch between the medium's weight and the geometric volume element ($\omega \neq |J_\phi|$) induces an effective advective field $\mathcal{V}_j$. This reveals that anomalous transport in inhomogeneous media can be decomposed into a purely diffusive geometric part and a first-order transport process.
    \item \textbf{Super-diffusive Heavy Tails:} Our asymptotic analysis confirms that the propagator exhibits power-law decay governed by the deformed distance. This confirms that the model captures long-range interactions and Lévy-type jumps, providing a robust mathematical tool for modeling transport in complex metamaterials and graded media.
\end{enumerate}

Ultimately, the use of Fox $H$-functions provides a closed-form representation that preserves the structural richness of the operator, offering a powerful generalization for the study of anomalous phenomena in non-Euclidean geometries.
\section*{Declaration of competing interest}
The author declares that he has no known competing financial interests or personal relationships that could have appeared to influence the work reported in this paper.


\section*{Declaration of Generative AI and AI-assisted technologies in the manuscript preparation process}

During the preparation of this work the author(s) used Gemini (Google) in order to improve the readability and language quality of the manuscript, as well as to assist in structuring the discussion of the physical interpretations. After using this tool/service, the author(s) reviewed and edited the content as needed and take(s) full responsibility for the content of the published article.

\bibliographystyle{elsarticle-num}

\end{document}